\newtheorem{theorem}{Theorem}[section]
\theoremstyle{remark}
\def\sR{sub-Rie\-man\-ni\-an }
\newcommand{\be}[1]{\begin{equation}\label{#1}}
\newcommand{\ee}{\end{equation}}
\newcommand{\R}{\mathbb{R}}
\newcommand{\der}[2]{\frac{d \, #1}{d\, #2} }
\newcommand{\pder}[2]{\frac{\partial \, #1}{\partial \, #2} }
\newcommand{\eq}[1]{$(\protect\ref{#1})$}
\def\R{\mathbb{R}}
\def\sspan{\operatorname{span}}
\newcommand{\spann}{\operatorname{span}\nolimits}
\renewcommand{\Vec}{\operatorname{Vec}\nolimits}
\newcommand{\hall}{\operatorname{Hall}\nolimits}
\newcommand{\Lie}{\operatorname{Lie}}
\def\ds{\displaystyle}
\def\CL{\mathcal{L}}
\title{On Carnot algebra \\with the growth vector $(2,3,5,8)$}
\author{
Yuri Sachkov\\
Program Systems Institute\\
Russian Academy of Sciences\\
Pereslavl-Zalessky,  Russia\\
E-mail: sachkov@sys.botik.ru}
\date{April 1, 2013}
\begin{document}
\maketitle

\bigskip
\centerline{\em Dedicated to Andrei Aleksandrovich Agrachev, with gratitude}
\bigskip

\begin{abstract}
We compute two vector field models of
 the  Carnot algebra with the growth vector $(2,3,5,8)$, and an infinitesimal symmetry of the corresponding \sR structure.
\end{abstract}

\tableofcontents

\section{Introduction}
Carnot groups provide a nilpotent approximation to generic \sR manifolds~\cite{gromov, mitchell, bellaiche, agrachev_sarychev, mont}. 
The free nilpotent \sR structures  are a natural first subject of study in \sR geometry starting from the growth vector $(2,3)$ --- the left-invariant \sR structure on the Heisenberg group~\cite{brock, versh_gersh}.
The next free rank 2 case --- the growth vector $(2,3,5)$  --- was studied in~\cite{dido_exp, max1, max2, max3}.

In this work we start to study the next free rank 2 case --- the growth vector $(2,3,5,8)$.
We compute two vector field models of the  corresponding Carnot algebra, and an infinitesimal symmetry of the corresponding \sR structure (see Sec. 3).

\section{Free nilpotent and Carnot Lie algebras}
\subsection{Free nilpotent Lie algebras}

Let $\CL_d$ be the real free Lie algebra with $d$ generators~\cite{reutenauer}; $\CL_d$ is the Lie algebra of commutators  of $d$ variables. We have $\CL_d=\oplus^\infty_{i=1}\CL_d^i$, where $\CL^i_d$  is the space of  commutator polynomials of degree $i$. Then 
$$\CL^{(r)}_d := \CL_d / \oplus^\infty_{i=r+1}\CL_d^i  $$
is the free nilpotent  Lie algebra of step $r$ (or of  length $r$). 

Denote
$$l_d(i):=\dim \CL^i_d, \qquad l^{(r)}_d := \dim \CL^{(r)}_d = \sum^r_{i=1}l_d(i).$$
The classical   expression of $l_d(i)$ is 
$$il_d(i)=d^i - \sum_{j|i,\ 1 \leq j<i} jl_d(j).$$

In this  work we will be interested  in free nilpotent Lie algebras with $2$ generators. Dimensions of such Lie algebras for small step are given in Table~1.   
\begin{table}[htbp]
\label{tab:dim}
\begin{center}
\begin{tabular}{|c|r|r|r|r|r|r|r|r|r|r|}
\hline
$i $&      1 & 2 & 3  & 4 & 5& 6& 7&   8& 9& 10\\
\hline
$l_2(i)$ & 2 & 1 & 2 & 3 &  6& 9& 18& 30& 56& 99\\
\hline
$l_2^{(i)}$ & 2 & 3 & 5 & 8 & 14&23& 41& 71&127&226\\
\hline
\end{tabular}
\end{center}
\caption{Dimensions of free nilpotent Lie algebras with 2 generators}
\end{table}

\subsection{Carnot algebras and groups}

A Lie algebra $L$ is called a Carnot algebra if it admits a decomposition
$$L= \oplus^r_{i=1} L_i$$
as a vector space, such that
\begin{align*}
&[L_i, L_j] \subset L_{i+ j}, \\
&L_s = {0} \text{ for } s>r,\\
&L_{i+1}=[L_1, L_i].
\end{align*}

A free nilpotent Lie algebra $\CL^{(r)}_d$ is a Carnot algebra with $L_i=\CL^i_d.$

A Carnot group $G$ is a connected, simply connected Lie group whose Lie algebra $L$ is a Carnot algebra. If $L$ is realized as the Lie algebra of left-invariant vector fields on $G$, then the degree $1$ component $L_1$ can be thought of as a completely nonholonomic (bracket-generating) distribution  on $G$. If moreover $L_1$ is endowed with a left-invariant inner product
$\left\langle \ \cdot\ , \ \cdot\ \right\rangle$, then ($G, L_1, \left\langle \ \cdot\ , \ \cdot\ \right\rangle $)  becomes a nilpotent left-invariant \sR manifold~\cite{mont}. Such \sR  structures  are nilpotent approximations of generic \sR  structures~\cite{gromov, mitchell, bellaiche, agrachev_sarychev}.

The sequence of numbers
$$ \left( \dim L_1, \dim L_1+\dim L_2, \dots, \dim L_1 + \dots + \dim L_r = \dim L \right)$$
is called the growth vector of the distribution $ L_1$~\cite{versh_gersh}.

For free nilpotent Lie algebras, the growth vector is maximal compared with all Carnot algebras with the bidimension $(\dim L_1, \dim L)$.

In this work we consider the Carnot algebra with the growth vector (2, 3, 5, 8).

\section{Lie algebra with the growth vector $(2, 3, 5, 8)$}

The Carnot algebra with the growth vector (2, 3, 5, 8) 
$$\CL^{(4)}_2 = \sspan (X_1, \dots, X_8)$$
is determined by the following  multiplication table:
\begin{align}
[X_1, X_2] &= X_3, \label{X1X2}\\
[X_1, X_3] &= X_4, \quad  [X_2, X_3] = X_5, \label{X1X3}\\
[X_1, X_4] &= X_6, \quad [X_1, X_5] = [X_2, X_4] = X_7, \quad [X_2, X_5]= X_8, \label{X1X5}
\end{align}
 with all the rest brackets equal to zero. 


\subsection{Hall basis}

Free nilpotent Lie algebras have a convenient basis introduced by M. Hall~\cite{hall}.
We describe it using the exposition of~\cite{grayson_grossman1}.

The Hall basis of the free Lie algebra $\CL_d$ with $d$ generators $X_1$, \dots, $X_d$
is the subset $\hall \subset \CL_d$ that has a decomposition into homogeneous components
$$
\hall = \cup_{i=1}^{\infty} \hall_i
$$
defined as follows.

Each element $H_j$, $j = 1, 2, \dots$, of the Hall basis  is a monomial in the generators $X_i$ and is defined recursively as follows. The generators satisfy the inclusion
$$
X_i \in \hall_1, \qquad i = 1, \dots, d,
$$
and we denote
$$
H_i = X_i, 
\qquad i = 1, \dots, d.
$$
If we have defined basis elements 
$$
H_1, \dots, H_{N_{p-1}} \in \oplus_{j=1}^{p-1} \hall_j,
$$
they are simply ordered so that $E < F$ if $E \in \hall_k$, $F \in \hall_l$, $k < l$:
$$
H_1 < H_2 < \dots < H_{N_{p-1}}.
$$
Also if $E \in \hall_s$, $F \in \hall_t$ and $p = s +t$, then
$$
[E,F] \in \hall_p
$$
if:
\begin{enumerate}
\item
$E > F$, and
\item
if $E = [G,K]$, hen $K \in \hall_q$ and $t \geq q$.
\end{enumerate}

By this definition, one easily computes recursively the first components $\hall_i$ 
of the Hall basis
for $d = 2$:
\begin{align*}
&\hall_1 = \{H_1, H_2\}, \qquad H_1 = X_1, \quad H_2 = X_2, \\
&\hall_2 = \{H_3\}, \qquad H_3 = [X_2, X_1], \\
&\hall_3 = \{H_4, H_5\}, \qquad H_4 = [[X_2, X_1],X_1], \quad  H_5 = [[X_2, X_1],X_2], \\
&\hall_4 = \{H_6, H_7, H_8\}, \\ 
&H_6 = [[[X_2, X_1],X_1],X_1], \  H_7 = [[[X_2, X_1],X_1],X_2], \  H_8 = [[[X_2, X_1],X_2],X_2].
\end{align*}
Consequently,
$$
\CL_2^{(4)} = \spann\{H_1, \dots, H_8\}.
$$
In the sequel we use a more convenient basis
$$
\CL_2^{(4)} = \spann\{X_1, \dots, X_8\}
$$
with the multiplication table~\eq{X1X2}--\eq{X1X5}.

\subsection{Asymmetric vector field model for $\CL_2^{(4)}$}
Here we recall an algorithm for construction of a vector field model for the Lie algebra $\CL_2^{(r)}$ due to Grayson and Grossman~\cite{grayson_grossman1}.
For a given $r \geq 1$, the algorithm evaluates two polynomial vector fields $H_1, H_2 \in \Vec(\R^N)$, $N = \dim \CL_2^{(r)}$, which generate the Lie algebra $\CL_2^{(r)}$. 

Consider the Hall basis elements
$$
\spann\{H_1, \dots, H_N\} = \CL_2^{(r)}.
$$
Each element $H_i \in \hall_j$ is a Lie bracket of length $j$:
\begin{align*}
&H_i = [\dots[[H_2, H_{k_j}],H_{k_{j-1}}], \dots, H_{k_1}],\\
&k_j = 1, \qquad k_{n+1} \leq k_n \text{ for } 1 \leq n \leq j-1.
\end{align*} 
This defines a partial ordering of the basis elements. We say that $H_i$ is a direct descendant of $H_2$ and of each $H_{k_l}$ and write $i \succ 2$, $i \succ k_l$, $l = 1, \dots, j$.

Define monomials $P_{2,k}$ in $x_1$, \dots, $x_N$ inductively by 
$$
P_{2,k} = - x_j \ P_{2,i} /(\deg_j P_{2,i} + 1),
$$
  whenever $H_k = [H_i,H_j]$ is a basis Hall element, and where  $\deg_j P$ is the highest power of $x_j$ which divides $P$. 

The following theorem gives the properties of the generators.

\begin{theorem}[Th. 3.1 \cite{grayson_grossman1}]\label{th:Hall}
Let $r \geq 1$ and let $N = \dim  \CL_2^{(r)}$. Then the vector fields
$$
H_1 = \pder{}{x_1}, \qquad H_2 = \pder{}{x_2} + \sum_{i \succ 2} P_{2,i} \pder{}{x_i}
$$
have the following properties:
\begin{enumerate}
\item
they are homogeneous of weight one with respect to the grading
$$
\R^N = \hall_1 \oplus \dots \oplus \hall_r;
$$
\item
$\Lie(H_1, H_2) = \CL_2^{(r)}$.
\end{enumerate}
\end{theorem}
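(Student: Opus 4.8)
The plan is to prove this by reconstructing the Grayson--Grossman argument, which rests on the weight grading making the construction almost forced. First I would set up the grading explicitly: assign to the coordinate $x_i$ the weight $w(i) = j$ whenever $H_i \in \hall_j$, and declare $\partial/\partial x_i$ to have weight $-w(i)$, so that a monomial times $\partial/\partial x_i$ has weight $\bigl(\sum_k \deg_k P \cdot w(k)\bigr) - w(i)$. The first task is then to check that each $P_{2,i}\,\partial/\partial x_i$ has weight exactly $1$; this I would do by induction on the length $j$ of $H_i$, using the recursion $P_{2,k} = -x_j\,P_{2,i}/(\deg_j P_{2,i}+1)$: if $H_k=[H_i,H_j]$ then $w(k)=w(i)+w(j)$, and multiplying by $x_j$ adds $w(j)$ to the monomial weight while the $\partial/\partial x_k$ versus $\partial/\partial x_i$ switch subtracts $w(k)-w(i)=w(j)$, so the weight of the term is preserved along the recursion and equals the base case weight, which is $1$ for $H_2$ itself (weight of $\partial/\partial x_2$ is $-1$, and $P_{2,2}=1$ — wait, one must start the induction at the direct descendants of $H_2$, where $P_{2,i}$ is a single variable $x_1$). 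Since $H_1 = \partial/\partial x_1$ also has weight $1$, part~(1) follows.

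Second, for part~(2) I would argue that $\Lie(H_1,H_2)$ is a nilpotent Lie algebra of step at most $r$ — because all vector fields involved are polynomial and homogeneous of positive weight with respect to a grading in which only finitely many weights ($1,\dots$ bounded by how the coordinates split) occur, iterated brackets of weight $\geq r+1$ must vanish identically, as there are no coordinates of weight exceeding $r$ to support them. Hence $\Lie(H_1,H_2)$ is a quotient of the free nilpotent Lie algebra $\CL_2^{(r)}$ via the map $X_1 \mapsto H_1$, $X_2 \mapsto H_2$. To get equality of dimensions it then suffices to show that the $N$ bracket monomials obtained by applying to $H_1,H_2$ the same bracket patterns that define $H_1,\dots,H_N$ in the Hall basis are linearly independent as vector fields on $\R^N$. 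Here is where the explicit form of $P_{2,i}$ pays off: one shows by induction on weight that the bracket of $H_1,H_2$ corresponding to the Hall element $H_i$ has the form $\partial/\partial x_i + (\text{terms } \partial/\partial x_k \text{ with } w(k) > w(i))$, i.e.\ it is "upper triangular" with leading coefficient $1$ in the $\partial/\partial x_i$ slot. The normalizing denominators $\deg_j P_{2,i}+1$ in the definition of $P_{2,i}$ are exactly what makes the coefficient come out to $1$ (a derivative $\partial/\partial x_j$ hitting $x_j^{m}\cdot(\text{rest})$ produces $m$, and $m = \deg_j P_{2,i}+1$ by construction, cancelling the denominator). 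Triangularity with unit leading coefficients gives linear independence, hence $\dim \Lie(H_1,H_2) \geq N$, and combined with the quotient bound, equality.

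The main obstacle is the bookkeeping in this second induction: one has to track how the Lie bracket $[\,\cdot\,,H_{k_l}]$ acts on a vector field of the form $\partial/\partial x_i + (\text{higher})$ and verify both that it lands on $\partial/\partial x_{(\text{next Hall element})}$ with coefficient exactly $1$ and that all the extra terms it generates still have strictly larger weight and do not interfere with the leading terms of the other basis brackets. This requires knowing that the Hall basis monomials $P_{2,k}$ "match up" correctly under differentiation — precisely the content of the recursion for $P_{2,k}$ — and that the admissibility conditions (1)--(2) in the Hall basis definition ensure no two distinct bracket patterns produce the same leading coordinate. Since the statement is quoted verbatim as Theorem~3.1 of~\cite{grayson_grossman1}, I would in practice cite that reference for the detailed verification and only sketch the grading argument above; the grading part (1) is genuinely short, while part (2)'s triangularity claim is where the real work sits.
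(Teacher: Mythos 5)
The paper does not prove this statement at all: it is quoted verbatim as Theorem~3.1 of the Grayson--Grossman reference and used as a black box, so there is no in-paper proof to compare against. Your sketch is, in outline, the standard argument from that source and is sound: the weight bookkeeping for part~(1) via induction on the recursion $P_{2,k} = -x_j P_{2,i}/(\deg_j P_{2,i}+1)$ is correct (each factor of $x_j$ adds $w(j)$ to the coefficient while the shift from $\partial/\partial x_i$ to $\partial/\partial x_k$ subtracts $w(k)-w(i)=w(j)$), and for part~(2) the two halves --- homogeneity forcing brackets of length $>r$ to vanish, hence a surjection $\CL_2^{(r)} \to \Lie(H_1,H_2)$, plus triangularity of the Hall-pattern brackets to force injectivity --- are exactly the right structure. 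Two caveats: the leading coefficients are $\pm 1$, not $+1$ (the signs alternate with bracket length; compare $H_4 = -\partial/\partial x_4 + \cdots$ and $H_6 = -\partial/\partial x_6$ in the paper's own display), which is harmless for linear independence but should be stated correctly; and the triangularity induction itself --- that $[\,\cdot\,, H_{k_l}]$ sends $\pm\partial/\partial x_i + (\text{higher weight})$ to $\mp\partial/\partial x_k + (\text{higher weight})$ with the denominator $\deg_j P_{2,i}+1$ cancelling the combinatorial factor from differentiation --- is asserted rather than carried out, and that is where essentially all the work of the theorem lives. Since you explicitly defer that verification to the cited reference, as the paper itself does, your proposal is acceptable as a sketch but is not a self-contained proof.
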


The algorithm described before Theorem~\ref{th:Hall} produces the following vector
field basis of $\CL_2^{(4)}$:
\begin{align*}
    H_1 &= \frac{\partial }{\partial x_1},\\
    H_2 &= \frac{\partial }{\partial x_2} - x_1\frac{\partial }{\partial x_3} - \frac{x_1^2}{2}\frac{\partial }{\partial x_4}
         - x_1x_2\frac{\partial }{\partial x_5} +  \frac{x_1^3}{6}\frac{\partial }{\partial x_6} + \frac{x_1^2x_2}{2}\frac{\partial }{\partial x_7}
         + \frac{x_1x_2^2}{2}\frac{\partial }{\partial x_8},\\
    H_3 &= \frac{\partial }{\partial x_3} + x_1\frac{\partial }{\partial x_4} + x_2\frac{\partial }{\partial x_5}
         - \frac{x_1^2}{2}\frac{\partial }{\partial x_6} - x_1x_2\frac{\partial }{\partial x_7}
         - \frac{x_2^2}{2}\frac{\partial }{\partial x_8},\\
    H_4 &= -\frac{\partial }{\partial x_4} + x_1\frac{\partial }{\partial x_6} + x_2\frac{\partial }{\partial x_7},\\
    H_5 &= -\frac{\partial }{\partial x_5} + x_1\frac{\partial }{\partial x_7} + x_2\frac{\partial }{\partial x_8},\\
    H_6 &= -\frac{\partial }{\partial x_6},\\
    H_7 &= -\frac{\partial }{\partial x_7},\\
    H_8 &= -\frac{\partial }{\partial x_8},
\end{align*}
with the multiplication table
\begin{align}
    \left[ H_2, H_1 \right] &= H_3, \label{H2H1}\\
    \left[ H_3, H_1 \right] &= H_4, \; \left[ H_3, H_2 \right] = H_5, \label{H3H1} \\
    \left[ H_4, H_1 \right] &= H_6, \; \left[ H_4, H_2 \right] = H_7, \; 
    \left[ H_5, H_2 \right] = H_8. \label{H4H1}
\end{align}

\subsection{Symmetric vector field model of $\CL_2^{(4)}$}
The vector field model of the Lie algebra $\CL_2^{(4)}$ via the fields $H_1,\dotsc, H_8$
obtained in the previous subsection is asymmetric in the sense that
there is no visible symmetry between the vector fields $H_1$ and $H_2$.
Moreover, no continuous symmetries of the \sR structure
generated by the orthonormal frame $\left\{ H_1, H_2 \right\}$ are visible,
although the Lie brackets \eqref{H2H1}--\eqref{H4H1} suggest that this 
sub-Riemannian structure should be preserved by a one-parameter group of
rotations in the plane $\sspan\{H_1, H_2\}$.

One can find a symmetric vector field model of $\CL_2^{(4)}$ free of such
shortages as in the following statement.

\begin{theorem} \label{th:Xi}
    \begin{itemize}
        \item[$(1)$] 
            The vector fields
\begin{align}
&X_1 = \pder{}{x_1} - \frac{x_2}{2} \pder{}{x_3} - \frac{x_1^2 + x_2^2}{2} \pder{}{x_5} - \frac{x_1 x_2^2}{4} \pder{}{x_7} - \frac{x_2^3}{6} \pder{}{x_8}, \label{X1} \\
&X_2 = \pder{}{x_2} + \frac{x_1}{2} \pder{}{x_3} + \frac{x_1^2 + x_2^2}{2} \pder{}{x_4} + \frac{x_1^3}{6} \pder{}{x_6} + \frac{x_1^2 x_2}{4} \pder{}{x_7}, \label{X2}\\
&X_3 = \pder{}{x_3} + x_1 \pder{}{x_4} + x_2 \pder{}{x_5} + \frac{x_1^2}{2} \pder{}{x_6} + x_1 x_2 \pder{}{x_7} + \frac{x_2^2}{2} \pder{}{x_8}, \label{X3}\\
&X_4 = \pder{}{x_4} + x_1 \pder{}{x_6} + x_2 \pder{}{x_7}, \label{X4}\\
&X_5 = \pder{}{x_5} + x_1 \pder{}{x_7} + x_2 \pder{}{x_8}, \label{X5}\\
&X_6 = \pder{}{x_6}, \label{X6}\\
&X_7 = \pder{}{x_7}, \label{X7}\\
&X_8 = \pder{}{x_8} \label{X8}
\end{align}         
            satisfy the multiplication table~\eq{X1X2}--\eq{X1X5}.
            Thus the fields $X_1,\dotsc, X_8 \in \Vec(\R^8)$ model the Lie
            algebra $\CL_2^{(4)}$.
        \item[$(2)$] 
            The vector field 
\begin{align}
&X_0 = x_2 \pder{}{x_1} - x_1 \pder{}{x_2} + x_5 \pder{}{x_4} - x_4 \pder{}{x_5} + P \pder{}{x_6} + Q \pder{}{x_7} + R \pder{}{x_8},  \label{X0}\\
&P = - \frac{x_1^4}{24} +   \frac{x_1^2 x_2^2}{8} + x_7, \label{P}\\ 
&Q = \frac{x_1 x_2^3}{12} +   \frac{x_1^3 x_2}{12} - 2 x_6 + 2 x_8, \label{Q}\\
&R = \frac{x_1^2 x_2^2}{8} - \frac{x_2^4}{24} -    x_7 \label{R}
\end{align}
            satisfies the following relations:
\begin{align}
&[X_0, X_1] = X_2, \qquad   [X_0, X_2] = - X_1, \qquad [X_0, X_3] = 0, \label{X0X1}\\
&[X_0, X_4] = X_5, \qquad   [X_0, X_5] = - X_4, \label{X0X4}\\
&[X_0, X_6] = 2 X_7, \qquad   [X_0, X_7] = X_8 - X_6, \qquad   [X_0, X_8] = - 2 X_7. \label{X0X6}
\end{align}
            Thus the field $X_0$ is an infinitesimal symmetry of the sub-Riemannian structure
            generated by the orthonormal frame $\left\{ X_1, X_2 \right\}$. 
    \end{itemize}
\end{theorem}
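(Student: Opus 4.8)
\emph{Part (1).} All brackets are computed componentwise from $[V,W]^k=\sum_j\bigl(V^j\,\partial_{x_j}W^k-W^j\,\partial_{x_j}V^k\bigr)$, and the number of computations is cut down using the grading. By inspection, each $X_i$ in \eqref{X1}--\eqref{X8} is homogeneous with respect to the dilations $\delta_t\colon x_i\mapsto t^{w_i}x_i$ attached to the grading $\R^8=\hall_1\oplus\dots\oplus\hall_4$, where $w_1=w_2=1$, $w_3=2$, $w_4=w_5=3$, $w_6=w_7=w_8=4$; the $\delta_t$-weights are $1$ for $X_1,X_2$, then $2$ for $X_3$, then $3$ for $X_4,X_5$, and $4$ for $X_6,X_7,X_8$. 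Since the bracket of homogeneous fields adds weights and no nonzero polynomial vector field in these coordinates has weight $\geq 5$, every bracket $[X_i,X_j]$ with $w_i+w_j\geq 5$ vanishes identically; the remaining pairs are exactly those occurring in \eqref{X1X2}--\eqref{X1X5} (together with the trivial $[X_3,X_3]=0$). I would then compute the seven brackets $[X_1,X_2]$, $[X_1,X_3]$, $[X_2,X_3]$, $[X_1,X_4]$, $[X_2,X_4]$, $[X_1,X_5]$, $[X_2,X_5]$ and check that they equal $X_3$, $X_4$, $X_5$, $X_6$, $X_7$, $X_7$, $X_8$, respectively. This labour is roughly halved by observing that the linear involution $\Phi\colon(x_1,\dots,x_8)\mapsto(x_2,x_1,-x_3,-x_5,-x_4,-x_8,-x_7,-x_6)$ carries \eqref{X1}--\eqref{X8} to themselves with $\Phi_*X_1=X_2$, $\Phi_*X_2=X_1$, so each bracket involving $X_2$ is the $\Phi_*$-image of the one involving $X_1$ (and $\Phi_*X_3=-X_3$, $\Phi_*X_4=-X_5$, etc.\ follow formally). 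Finally $X_i|_0=\partial/\partial x_i$, so $X_1,\dots,X_8$ are linearly independent; hence $\sspan\{X_1,\dots,X_8\}$ is an $8$-dimensional Lie algebra satisfying the defining relations \eqref{X1X2}--\eqref{X1X5} of $\CL_2^{(4)}$, and $X_i\mapsto X_i$ is a Lie algebra isomorphism onto it.

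\emph{Part (2).} I would verify \eqref{X0X1}--\eqref{X0X6} by the same componentwise computation. The three brackets $[X_0,X_6]$, $[X_0,X_7]$, $[X_0,X_8]$ are immediate: since $X_6,X_7,X_8$ are the constant fields $\partial/\partial x_6,\partial/\partial x_7,\partial/\partial x_8$, one has $[X_0,X_6]=-\partial_{x_6}X_0=-(\partial_{x_6}Q)\,\partial/\partial x_7=2X_7$, and likewise $[X_0,X_7]=X_8-X_6$, $[X_0,X_8]=-2X_7$; these are exactly the identities in which the specific shape of $P,Q,R$ in \eqref{P}--\eqref{R} is used (they pin down the $x_6,x_7,x_8$-linear parts, the remaining parts being fixed by the other relations). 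The brackets $[X_0,X_1]$, $[X_0,X_2]$, $[X_0,X_3]$, $[X_0,X_4]$, $[X_0,X_5]$ are then checked directly, with $[X_0,X_2]$ and $[X_0,X_5]$ deduced from $[X_0,X_1]$ and $[X_0,X_4]$ via $\Phi_*$ once one checks $\Phi_*X_0=-X_0$ (as expected, since a reflection conjugates the rotation flow generated by $X_0$ to its inverse). As a running check, $X_0$ is $\delta_t$-homogeneous of weight $0$, so each $[X_0,X_i]$ must carry the same weight as $X_i$, which is consistent with \eqref{X0X1}--\eqref{X0X6} term by term.

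\emph{From the relations to the conclusion.} Once \eqref{X0X1}--\eqref{X0X6} hold, the identities $[X_0,X_1]=X_2$, $[X_0,X_2]=-X_1$ show that $\ad X_0$ maps the distribution $\distr=\sspan\{X_1,X_2\}$ into itself and acts on it, in the frame $\{X_1,X_2\}$, by the matrix $\left(\begin{smallmatrix}0&-1\\1&0\end{smallmatrix}\right)\in\mathfrak{so}(2)$. Hence the flow of $X_0$ preserves $\distr$ and restricts on it to a one-parameter group of rotations, which are isometries of the inner product making $\{X_1,X_2\}$ orthonormal (and $[X_0,X_3]=0$ says this flow fixes the degree-$2$ direction $X_3$). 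Therefore $X_0$ is an infinitesimal symmetry of the \sR structure generated by the orthonormal frame $\{X_1,X_2\}$.

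\emph{Main obstacle.} There is no conceptual difficulty here; the work is bookkeeping of polynomial coefficients. The one genuinely delicate point is keeping the rational constants in the cubic and quartic terms of $X_1,X_2$ and in $P,Q,R$ exactly right, so that the cancellations producing $[X_0,X_1]=X_2$ and $[X_0,X_6]$, $[X_0,X_7]$, $[X_0,X_8]$ come out precisely as stated.
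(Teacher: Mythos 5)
Your proposal is correct, and at bottom it is the same proof as the paper's: the author explicitly says both items ``are verified by direct computation,'' which is exactly what you do. The difference is in how the computation is organized and what is added around it. You reduce the verification by two devices the paper does not use: (i) the weighted homogeneity with respect to the grading $\hall_1\oplus\dots\oplus\hall_4$, which kills all brackets of total weight $\geq 5$ at once and leaves exactly the seven nontrivial ones, and (ii) the involution $\Phi\colon(x_1,\dots,x_8)\mapsto(x_2,x_1,-x_3,-x_5,-x_4,-x_8,-x_7,-x_6)$, which indeed satisfies $\Phi_*X_1=X_2$, $\Phi_*X_3=-X_3$, $\Phi_*X_4=-X_5$, $\Phi_*X_6=-X_8$, $\Phi_*X_7=-X_7$ and $\Phi_*X_0=-X_0$ (I checked $P\circ\Phi=R$, $Q\circ\Phi=Q$, so this works), halving the remaining checks; both devices are sound. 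The paper instead spends its proof deriving the coefficients: it extends the known symmetric $(2,3,5)$ model by an ansatz $X_i+\sum_{j=6}^8 a_i^j\partial/\partial x_j$, fixes $a_3^6,a_3^7,a_3^8$ by a determinant condition, integrates the resulting PDEs for $a_1^j,a_2^j$, and similarly solves $X_kP$, $X_kQ$, $X_kR$ step by step to produce \eqref{P}--\eqref{R}. So your write-up verifies more efficiently but does not explain where the fields come from, while the paper's explains the construction and then falls back on unstructured direct computation for the verification; logically both are complete, and your final paragraph on why \eqref{X0X1}--\eqref{X0X6} make $X_0$ an infinitesimal symmetry is a point the paper leaves implicit.
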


\begin{proof}

    In fact, the both statements of the proposition are verified by the direct
computation, but we prefer to describe a method of construction of the
vector fields $X_1,\dotsc, X_8$, and $X_0$.

$(1)$
In the previous work \cite{dido_exp} we constructed a similar symmetric
vector field model for the Lie algebra $\CL_2^{(3)}$, which has growth vector (2, 3, 5):
\begin{align}
    \CL_2^{(3)} &= \sspan\{X_1,\dotsc, X_5\} \subset \Vec(\R^5), \\
     X_1 &=  \pder{}{x_1} - \frac{x_2}{2} \pder{}{x_3} - \frac{x_1^2 + x_2^2}{2} \pder{}{x_5}, \label{X1L23}\\
     X_2 &= \pder{}{x_2} + \frac{x_1}{2} \pder{}{x_3} + \frac{x_1^2 + x_2^2}{2} \pder{}{x_4},\label{X2L23}\\
     X_3 &= \pder{}{x_3} + x_1 \pder{}{x_4} + x_2 \pder{}{x_5},\label{X3L23}\\
     X_4 &= \pder{}{x_4}, \label{X4L23}\\
     X_5 &= \pder{}{x_5}, \label{X5L23}
\end{align}
with the Lie brackets~\eq{X1X2}, \eq{X1X3}. 
Now we aim to ``continue'' these relationships to vector fields
$X_1,\dotsc, X_8 \in \Vec(\R^8)$ that span the Lie algebra $\CL_2^{(4)}$.
So we seek for vector fields of the form
\begin{align}
    X_1 &= \frac{\partial }{\partial x_1} - \frac{x_2}{2}\frac{\partial }{\partial x_3} - \frac{x_1^2 + x_2^2}{2}\frac{\partial }{\partial x_5}
    + \sum\limits_{i=6}^8 a_1^i\frac{\partial }{\partial x_i},  \; \label{X1aij}   \\
    X_2 &= \frac{\partial }{\partial x_2} + \frac{x_1}{2}\frac{\partial }{\partial x_3} - \frac{x_1^2 + x_2^2}{2}\frac{\partial }{\partial x_4}
    + \sum\limits_{i=6}^8 a_2^i\frac{\partial }{\partial x_i}, \;  \label{X2aij} \\
    X_3 &= \frac{\partial }{\partial x_3} + x_1\frac{\partial }{\partial x_4} + x_2\frac{\partial }{\partial x_5}
    + \sum\limits_{i=6}^8 a_3^i\frac{\partial }{\partial x_i}, \; \label{X3aij} \\ 
    X_4 &= \frac{\partial }{\partial x_4} + \sum\limits_{i=6}^8 a_4^i\frac{\partial }{\partial x_i}, \; \label{X4aij} \\ 
    X_5 &= \frac{\partial }{\partial x_5} + \sum\limits_{i=6}^8 a_5^i\frac{\partial }{\partial x_i}, \; \label{X5aij}\\ 
    X_j &= \sum\limits_{i=i}^j a_i^j\frac{\partial }{\partial x_j}, \qquad j = 6, 7, 8, 
\end {align}
such that $\sspan\{X_1,\dotsc, X_8\} = \CL_2^{(4)}$.

Compute the required Lie brackets:

\begin{align*}
    \left[ X_1, X_2 \right] &= \frac{\partial }{\partial x_3} + x_1\frac{\partial }{\partial x_4} + x_2\frac{\partial }{\partial x_5}
    + \left( \frac{\partial a_2^6}{\partial x_1} - \frac{\partial a_1^6}{\partial x_2} \right)\frac{\partial }{\partial x_6}\\
    &+ \left( \frac{\partial a_2^7}{\partial x_1} - \frac{\partial a_1^7}{\partial x_2} \right)\frac{\partial }{\partial x_7}
    + \left( \frac{\partial a_2^8}{\partial x_1} - \frac{\partial a_1^8}{\partial x_2} \right)\frac{\partial }{\partial x_8},\\
    \left[ X_1, X_3 \right] &= \frac{\partial }{\partial x_4} + \frac{\partial a_3^6}{\partial x_1}\frac{\partial }{\partial x_6}
    + \frac{\partial a_3^7}{\partial x_1}\frac{\partial }{\partial x_7} + \frac{\partial a_3^8}{\partial x_1}\frac{\partial }{\partial x_8},\\
    \left[ X_2, X_3 \right] &= \frac{\partial }{\partial x_5} + \frac{\partial a_3^6}{\partial x_2}\frac{\partial }{\partial x_6}
    + \frac{\partial a_3^7}{\partial x_2}\frac{\partial }{\partial x_7} + \frac{\partial a_3^8}{\partial x_2}\frac{\partial }{\partial x_8},\\
    \left[ X_1, X_4 \right] &= \frac{\partial a_4^6}{\partial x_1}\frac{\partial }{\partial x_6} + \frac{\partial a_4^7}{\partial x_1}\frac{\partial }{\partial x_7}
    + \frac{\partial a_4^8}{\partial x_1}\frac{\partial }{\partial x_8},\\
    \left[ X_1, X_5 \right] &= \frac{\partial a_5^6}{\partial x_1}\frac{\partial }{\partial x_6} + \frac{\partial a_5^7}{\partial x_1}\frac{\partial }{\partial x_7}
    + \frac{\partial a_5^8}{\partial x_1}\frac{\partial }{\partial x_8},\\
    \left[ X_2, X_4 \right] &= \frac{\partial a_4^6}{\partial x_2}\frac{\partial }{\partial x_6} + \frac{\partial a_4^7}{\partial x_2}\frac{\partial }{\partial x_7}
    + \frac{\partial a_4^8}{\partial x_2}\frac{\partial }{\partial x_8},\\
    \left[ X_2, X_5 \right] &= \frac{\partial a_5^6}{\partial x_2}\frac{\partial }{\partial x_6} + \frac{\partial a_5^7}{\partial x_2}\frac{\partial }{\partial x_7}
    + \frac{\partial a_5^8}{\partial x_2}\frac{\partial }{\partial x_8}.
\end{align*}

The vector fields $X_1,\dotsc, X_8$ should be independent, thus the determinant constructed of these vectors as columns should satisfy the inequality
\begin{align*}
D = \det \left(X_1,\dotsc, X_8\right) = 
\begin{vmatrix}
a_6^6&a_7^6&a_8^6\\
a_6^7&a_7^7&a_8^7\\
a_6^8&a_7^8&a_8^8
\end{vmatrix} \neq 0.
\end{align*}
We will choose $a_i^j$ such that $D = 1$. It follows from the multiplication table
for $X_1,\dotsc, X_8$ that
\begin{align*}
D =  
\begin{vmatrix}
\ds    \frac{d^2a_3^6}{dx_1^2} & \ds\frac{d^2a_3^6}{dx_1dx_2}  & \ds\frac{d^2a_3^6}{dx_2^2}\\
\ds    \frac{d^2a_3^7}{dx_1^2} & \ds\frac{d^2a_3^7}{dx_1dx_2}  & \ds\frac{d^2a_3^7}{dx_2^2}\\
 \ds   \frac{d^2a_3^8}{dx_1^2} & \ds\frac{d^2a_3^8}{dx_1dx_2}  & \ds\frac{d^2a_3^8}{dx_2^2} 
\end{vmatrix}.
\end{align*}
In order to get $D = 1$, define the entries of this matrix as following symmetric way:
\begin{align*}
    a_3^6 = \frac{x_1^2}{2}, \qquad a_3^7 = x_1x_2, \qquad a_3^8 = \frac{x_2^2}{2}.  
\end{align*}
Then we obtain from the multiplication table for $X_1,\dotsc, X_8$ that
\begin{align*}
    \frac{\partial a_2^6}{\partial x_1} - \frac{\partial a_1^6}{\partial x_2} &= a_3^6 = \frac{x_1^2}{2},\\  
    \frac{\partial a_2^7}{\partial x_1} - \frac{\partial a_1^7}{\partial x_2} &= a_3^7 = x_1x_2,\\  
    \frac{\partial a_2^8}{\partial x_1} - \frac{\partial a_1^8}{\partial x_2} &= a_3^8 = \frac{x_2^2}{2}.  
\end{align*}
We solve these equations in the following symmetric way:
\begin{align*}
    a_1^6 &= 0, \qquad  a_2^6 = \frac{x_1^3}{6},\\ 
    a_1^7 &= -\frac{x_1x_2^2}{4}, \qquad   a_2^7 = \frac{x_1^2x_2}{4},\\
    a_1^8 &= -\frac{x_2^3}{6}, \qquad  a_2^8 = 0.  
\end{align*}
Then we substitute these coefficients to~\eq{X1aij}, \eq{X2aij} 
and check item (1) of this theorem by direct computation.

Now we prove item $(2)$.
We proceed exactly as for item $(1)$: we start from an infinitesimal symmetry~\cite{dido_exp}
\be{X0L23}
X_0 = x_2 \pder{}{x_1} - x_1 \pder{}{x_2} + x_5 \pder{}{x_4} - x_4 \pder{}{x_5} \in \Vec(\R^5)
\ee
of the \sR structure on $\R^5$ determined by the orthonormal frame \eq{X1L23}, \eq{X2L23} and ``continue'' symmetry~\eq{X0L23} to the \sR structure on $\R^8$ determined by the orthonormal frame~\eq{X1}, \eq{X2}.

So we seek for a vector field $X_0 \in \Vec(\R^8)$ of the form~\eq{X0} for the functions $P, Q, R \in C^{\infty}(\R^8)$ to be determined so that the multiplication table~\eq{X0X1}--\eq{X0X6} hold.

The first two equalities in~\eq{X0X1} yield
$$
X_1 P = - \frac{x_1^3}{6}, \qquad X_2 P = \frac{x_1^2 x_2}{2}.
$$ 
Further,
$$
X_3 P = [X_1, X_2] P = X_1 X_2 P - X_2 X_1 P = X_1 \frac{x_1^2 x_2}{2} + X_2 \frac{x_1^3}{6} = x_1 x_2.
$$
Similarly it follows that
$$
X_4 P = x_2, \quad 
X_5 P = x_1, \quad
X_6 P = 0, \quad  
X_7 P = 1, \quad 
X_8 P = 0. 
$$ 
Since $X_6 P = X_8 P = 0$, then $P = P(x_1, x_2, x_3, x_4, x_5, x_7)$. Moreover, since $X_7 P = 1$, then $P =  x_7 + a(x_1, x_2, x_3, x_4, x_5)$.
The equality
$X_5 P = x_1$ implies that $\pder{a}{x_5} = 0$, i.e., $a =   a(x_1, x_2, x_3, x_4)$.
Similarly, 
since
$X_4 P = x_2$, then  $a =   a(x_1, x_2, x_3)$.
It follows from the equality
$X_3 P = x_1x_2$ that $\pder{a}{x_3} = x_1x_2$, i.e.,   $a = x_1x_2x_3 +   b(x_1, x_2)$.
Moreover, the equality
$X_2 P = \frac{x_1^2x_2}{2}$ implies that $\pder{b}{x_2} = -x_1x_3 - \frac{x_1^2x_2}{4}$, i.e.,   $b = -x_1x_2x_3 -\frac{x_1^2x_2^2}{8} +   c(x_1)$.
Finally, the equality
$X_1 P = -\frac{x_1^3}{2}$ implies that $\der{c}{x_1} = -\frac{x_1^3}{6} + \frac{x_1x_2^2}{2}$
i.e.,
$c = -\frac{x_1^4}{24} + \frac{x_1^2x_2^2}{4}$.
Thus equality~\eq{P} follows. Similarly we get equalities~\eq{Q}, \eq{R}.

Then multiplication table~\eq{X0X1}--\eq{X0X6} for the vector field~\eq{X0}--\eq{R} is verified by a direct computation.  
\end{proof}

\section{Conclusion and future work}
We plan to perform a further study of the nilpotent \sR structure with the growth vector $(2,3,5,8)$ using its model obtained in Th.~\ref{th:Xi}:
\begin{itemize}
\item
describe the multiplication rule on the corresponding Carnot group $\R^8$,
\item
characterize Casimir functions and orbits of the co-adjoint action,
\item
describe abnormal extremal trajectories and prove strict abnormality of some of them,
\item
study symmetries, integrals and integrability of the Hamiltonian system for normal extremals,
\item
describe normal extremal trajectories.
\end{itemize}
These results will be published elsewhere.

\end{document}